\numberwithin{equation}{section}
\theoremstyle{plain} 
\newtheorem{theorem}{\indent\sc Theorem}[section]
\newtheorem{lemma}[theorem]{\indent\sc Lemma}
\newtheorem{corollary}[theorem]{\indent\sc Corollary}
\theoremstyle{definition} 
\newtheorem{definition}[theorem]{\indent\sc Definition}
\newtheorem{remark}[theorem]{\indent\sc Remark}
\def\address#1#2{\begingroup
\noindent\parbox[t]{7.8cm}{%
\small{\scshape\ignorespaces#1}\par\vskip1ex
\noindent\small{\itshape E-mail address}%
\/: #2\par\vskip4ex}\hfill%
\endgroup}%
\title{\uppercase{Biharmonic hypersurfaces with three distinct principal curvatures in Euclidean space}} 
\author{
%
\bigskip \\
\textsc{Yu Fu} 
}
\date{} 
\begin{document}

\maketitle

\footnote{ 
2010 \textit{Mathematics Subject Classification}. Primary 53D12;
Secondary 53C40. }
\footnote{ 
\textit{Key words and phrases}. Chen's conjecture, biharmonic
submanifolds. }
\footnote{ 
$^{*}$Thanks. }
\begin{abstract}
The well known Chen's conjecture on biharmonic submanifolds states
that a biharmonic submanifold in a Euclidean space is a minimal one
([10-13, 16, 18-21, 8]). For the case of hypersurfaces, we know that
Chen's conjecture is true for biharmonic surfaces in $\mathbb E^3$
([10], [24]), biharmonic hypersurfaces in $\mathbb E^4$ ([23]), and
biharmonic hypersurfaces in $\mathbb E^m$ with at most two distinct
principal curvature  ([21]). The most recent work of Chen-Munteanu
[18] shows that Chen's conjecture is true for $\delta(2)$-ideal
hypersurfaces in $\mathbb E^m$, where a $\delta(2)$-ideal
hypersurface is a hypersurface whose principal curvatures take three
special values: $\lambda_1, \lambda_2$ and $\lambda_1+\lambda_2$. In
this paper, we prove that Chen's conjecture is true for
hypersurfaces with three distinct principal curvatures in $\mathbb
E^m$ with arbitrary dimension, thus, extend all the above-mentioned
results. As an application we also show that Chen's conjecture is
true for $O(p)\times O(q)$-invariant hypersurfaces in Euclidean
space $\mathbb E^{p+q}$.
\end{abstract}
\section{Introduction}
\hspace*{\parindent}
Investigating the properties of biharmonic submanifolds in Euclidean
spaces was initiated by B. Y. Chen in the middle of 1980s in his
study on finite type submanifolds. At first, B.Y. Chen proved that
biharmonic surfaces in Euclidean 3-spaces are minimal, which was
also independently proved by G. Y. Jiang \cite{jiang1986}. Later on,
I. Dimitri\'{c} in his doctoral thesis \cite{Dimitri1989} and his
paper \cite{Dimitri1992} proved that any biharmonic curve in
Euclidean spaces $\mathbb E^m$ is a part of a straight line (i.e.
minimal); any biharmonic submanifold of finite type in $\mathbb E^m$
is minimal; any pseudo umbilical submanifold $M^n$ in $\mathbb E^m$
with $n\neq4$ is minimal, and any biharmonic hypersurface in
$\mathbb E^m$ with at most two distinct principal curvatures is
minimal. Hence, based on these results B. Y. Chen \cite{Chen1991} in
1991 made the following well known conjecture:

{\em Every biharmonic submanifold of Euclidean spaces is minimal}.

In 1995, the conjecture was proved by T. Hasanis and T. Vlachos
\cite{HasanisVlachos1995} for hypersurfaces in Euclidean 4-spaces,
(see also Defever's work\cite{defever1998} with a different proof).
However, the conjecture remains open. The main difficulty is that
the conjecture is a local problem and how to understand the local
structure of submanifolds satisfying $\Delta\overrightarrow{H}=0$.
Nevertheless, the study of the conjecture is quite active nowadays.
Recently, B. Y. Chen and M. I. Munteanu \cite{chenMunteanu2013}
proved that Chen's conjecture is true for $\delta(2)$-ideal and
$\delta(3)$-ideal hypersurfaces of a Euclidean space with arbitrary
dimension, where the principal curvatures of such hypersurfaces
takes special values. Under the assumption of completeness, K.
Akutagawa and S. Maeta \cite{akutagawa2013} proved that biharmonic
properly immersed submanifolds in Euclidean spaces are minimal.

On the other hand, from the view of $k$-harmonic maps, one can
define a biharmonic map between Riemannian manifolds if it is a
critical point of the bienergy functional. G. Y. Jiang in
\cite{jiang1986} showed that a smooth map is biharmonic if and only
if its bitension field vanishes identically. In the past ten years,
there exists a lot of remarkable work on biharmonic submanifolds in
spheres or even in generic Riemannian manifolds (see, for instance
[3, 5-9, 26-28]). Nowadays, investigating the properties of
biharmonic submanifolds is becoming a very active field of study.

In contrast to the submanifolds in Euclidean spaces, Chen's
conjecture is not always true for submanifolds in pseudo-Euclidean
spaces. This fact was achieved by B. Y. Chen and S. Ishikawa
\cite{chenishika1991,chenishika1999} who constructed several
examples of proper biharmonic surfaces in 4-dimensional
pseduo-Euclidean spaces $\mathbb E_s^4$ ($s=1, 2, 3$). But for
hypersurfaces in pseudo-Euclidean spaces, B. Y. Chen and S. Ishikawa
proved in \cite{chenishika1991,chenishika1999} that biharmonic
surfaces in pseudo-Euclidean 3-spaces are minimal, and A.
Arvanitoyeorgosa et al. \cite{ADKP} proved that biharmonic
Lorentzian hypersurfaces in Minkowski 4-spaces are minimal.

As we known, Chen's conjecture for hypersurfaces in $\mathbb E^4$
and for hypersurfaces in $\mathbb E^m$ with two distinct principal
curvatures were solved by Hasanis-Vlachos and Dimitri\'{c},
respectively. It is natural to study biharmonic hypersurfaces with
three distinct principal curvatures as the next step. Following B.
Y. Chen, I. Dimitri\'{c}, F. Defever et. al's techniques, we make
further progress on the conjecture. In a previous work \cite{fuyu},
we proved that Chen's conjecture is true for biharmonic
hypersurfaces with three distinct principal curvatures in $\mathbb
E^5$. In this paper, we are able to solve that general case.
Precisely, we will prove that biharmonic hypersurfaces with at most
three distinct principal curvatures in $\mathbb E^{n+1}$ with
arbitrary dimension are minimal. As an immediate conclusion, we show
that biharmonic $O(p)\times O(q)$-invariant hypersurfaces in
Euclidean spaces $\mathbb E^{p+q}$ are minimal.

\section{Preliminaries}
Let $x: M^n\rightarrow\mathbb{E}^{n+1}$ be an isometric immersion of
a hypersurface $M^n$ into $\mathbb{E}^{n+1}$. Denote the Levi-Civita
connections of $M^n$ and $\mathbb{E}^m$ by $\nabla$ and
$\tilde\nabla$, respectively. Let $X$ and $Y$ denote vector fields
tangent to $M^n$ and let $\xi$ be a unite normal vector field. Then
the Gauss and Weingarten formulas are given, respectively, by (cf.
\cite{chenbook2011, chenbook1973, chenbook1984})
\begin{eqnarray}
\tilde\nabla_XY&=&\nabla_XY+h(X,Y),\label{l23}\\
\tilde\nabla_X\xi&=&-AX,\label{l16}
\end{eqnarray}
where $h$ is the second fundamental form, and $A$ is the shape
operator. It is well known that the second fundamental form $h$ and
the shape operator $A$ are related by
\begin{eqnarray}\label{l3}
\langle h(X,Y),\xi\rangle=\langle AX,Y\rangle.
\end{eqnarray}
The mean curvature vector field $\overrightarrow{H}$ is given by
\begin{eqnarray}
\overrightarrow{H}=\frac{1}{n}{\rm trace}~h.
\end{eqnarray}
The Gauss and Codazzi equations are given, respectively, by
\begin{eqnarray*}
R(X,Y)Z=\langle AY,Z\rangle AX-\langle AX,Z\rangle AY,
\end{eqnarray*}
\begin{eqnarray*}
(\nabla_{X} A)Y=(\nabla_{Y} A)X,
\end{eqnarray*}
where $R$ is the curvature tensor and $(\nabla_XA)Y$ is defined by
\begin{eqnarray}\label{l7}
(\nabla_XA)Y=\nabla_X(AY)-A(\nabla_XY)
\end{eqnarray}
for all $X, Y, Z$ tangent to $M$.

Let $\Delta$ be the Laplacian operator of a submanifold $M$. For an
isometric immersion $ x: M^n\rightarrow\mathbb{E}^{m}$, the mean
curvature vector field $\overrightarrow{H}$ in $\mathbb{E}^{m}$
satisfies (see, for instance \cite{chenbook2011}, p. 44)
\begin{eqnarray*}
{\Delta}x=-n\overrightarrow{H}.
\end{eqnarray*}
\begin{definition}
Let $x:M^n\rightarrow\mathbb{E}^{m}$ be an isometric immersion of a
Riemannian $n$-manifold $M$ into a Euclidean space $\mathbb{E}^{m}$.
Then $M^n$ is called a biharmonic submanifold in $\mathbb E^m$ if
and only if $\Delta \overrightarrow{H}=0$, or equivalently,
$\Delta^2 x=0$.
\end{definition}
By Definition 2.1, it is clear that any minimal submanifolds in a
Euclidean space $\mathbb{E}^{m}$ must be trivially biharmonic. A
biharmonic submanifold in a Euclidean space $\mathbb{E}^{m}$ is
called proper biharmonic if it is not minimal.

Let $M^n$ be a hypersurface in $\mathbb E^{n+1}$. Assume that
$\overrightarrow{H}=H\xi$. Note that $H$ denotes the mean curvature.
By identifying the normal and the tangent parts of the biharmonic
condition $\Delta \overrightarrow{H}=0$, we obtain necessary and
sufficient conditions for $M^n$ to be biharmonic in $\mathbb
E^{n+1}$, namely
\begin{eqnarray}
&&\Delta H+H {\rm trace}\, A^2 =0,\\
&&2A\,{\rm grad}H+n\, H{\rm grad}H=0,
\end{eqnarray}
where the Laplace operator $\Delta$ acting on scalar-valued function
$f$ is given by (e.g., [13])
\begin{eqnarray}
\Delta f=-\sum_{i=1}^n(e_ie_i f-\nabla_{e_i}e_i f).
\end{eqnarray}
Here, $\{e_1,\ldots,e_n\}$ is a local orthonormal tangent frame on
$M^n$.

\section{Biharmonic hypersurfaces with three distinct principal curvatures in $\mathbb E^{n+1}$}
From now on, we concentrate on biharmonic hypersurfaces $M^n$ in a
Euclidean space $\mathbb E^{n+1}$ with $n\geq4$.

Assume that the mean curvature $H$ is not constant.

Observe from (2.7) that ${\rm grad}\,H$ is an eigenvector of the
shape operator $A$ with the corresponding principal curvature
$-\frac{n}{2}H$. Without loss of generality, we can choose $e_1$
such that $e_1$ is parallel to ${\rm grad}\,H$, and therefore the
shape operator $A$ of $M^n$ takes the following form with respect to
a suitable orthonormal frame $\{e_1,\ldots, e_n\}$.
\begin{eqnarray}
A=\left( \begin{array}{cccc} \lambda_1&\\
&\lambda_2 &\\&&\ddots &\\&&&\lambda_n
\end{array} \right),
\end{eqnarray}
where $\lambda_i$ are the principal curvatures and
$\lambda_1=-\frac{n}{2}H$. Let us express ${\rm grad}H$ as
\begin{eqnarray*}
{\rm grad}H=\sum_{i=1}^ne_i(H)e_i.
\end{eqnarray*}
Since $e_1$ is parallel to ${\rm grad}\,H$, it follows that
\begin{eqnarray}
e_1(H)\neq0,\quad e_i(H)=0, \quad i=2, 3, \ldots, n.
\end{eqnarray}
We write
\begin{eqnarray}
\nabla_{e_i}e_j=\sum_{k=1}^n\omega_{ij}^ke_k,\quad i,j=1, 2, \ldots,
n.
\end{eqnarray}
The compatibility conditions $\nabla_{e_k}\langle e_i,e_i\rangle=0$
and $\nabla_{e_k}\langle e_i,e_j\rangle=0$ imply respectively that
\begin{eqnarray}
\omega_{ki}^i=0,\quad \omega_{ki}^j+\omega_{kj}^i=0,
\end{eqnarray}
for $i\neq j$ and $i, j, k=1, 2, \ldots, n$. Furthermore, it follows
from (3.1) and (3.3) that the Codazzi equation yields
\begin{eqnarray}
e_i(\lambda_j)=(\lambda_i-\lambda_j)\omega_{ji}^j,\\
(\lambda_i-\lambda_j)\omega_{ki}^j=(\lambda_k-\lambda_j)\omega_{ik}^j
\end{eqnarray}
for distinct $i, j, k=1, 2, \ldots, n$.

Since $\lambda_1=-\frac{n}{2}H$, from (3.2) we get
\begin{eqnarray*}
[e_i,e_j](\lambda_1)=0,\quad i, j=2, 3, \ldots, n, \quad i\neq j,
\end{eqnarray*}
which yields directly
\begin{eqnarray}
\omega_{ij}^1=\omega_{ji}^1,
\end{eqnarray}
for distinct $i, j=2, 3, \ldots, n$.

Now we show that $\lambda_j\neq\lambda_1$ for $j=2, 3, \ldots, n$.
In fact, if $\lambda_j=\lambda_1$ for $j\neq1$, by putting $i=1$ in
(3.5) we have that
\begin{eqnarray}
0=(\lambda_1-\lambda_j)\omega_{j1}^j=e_1(\lambda_j)=e_1(\lambda_1),
\end{eqnarray}
which contradicts the first expression of (3.2).

By the assumption, $M^n$ has three distinct principal curvatures.
Without loss of generality, we assume that
\begin{eqnarray}
&&\lambda_2=\lambda_3=\cdots=\lambda_p=\alpha,\nonumber\\
&&\lambda_{p+1}=\lambda_{p+2}=\cdots=\lambda_n=\beta, \nonumber\\
&&\frac{n+1}{2}\leq p<n.
\end{eqnarray}
By the definition (2.4) of $\overrightarrow{H}$, we have
$nH=\sum_{i=1}^n\lambda_i$. Hence
\begin{eqnarray}
\beta=\frac{\frac{3}{2}nH-(p-1)\alpha}{n-p}.
\end{eqnarray}
Since $\lambda_j\neq\lambda_1$ for $i=2,\ldots,n$, we obtain
\begin{eqnarray}
\alpha\neq-\frac{n}{2}H, ~~\frac{3n}{2(n-1)}H,~~
\frac{n^2-(p-3)n}{2(p-1)}H.
\end{eqnarray}
The multiplicities of principal curvatures $\alpha$ and $\beta$ are
$p-1$ and $n-p$, respectively.

In the following, we will state a key conclusion for later use.
\begin{lemma}
Let $M^n$ be a proper biharmonic hypersurface with three distinct
principal curvatures in $\mathbb E^{n+1}$. Then $e_i(\lambda_j)=0$
for $i=1,2,\ldots,n$ and $j=2,3,\ldots,n$.
\end{lemma}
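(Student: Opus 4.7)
The goal is to show $e_i(\lambda_j)=0$ whenever $j\ge 2$. My plan is to reduce, via the Codazzi equations, to just a few unknown scalars, and then to kill those scalars using the biharmonic equation.

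\textbf{Directions $e_i$ with $i\ge 2$.} Because $n\ge 4$ forces $p-1\ge 2$, for any $i\in\{2,\dots,p\}$ I can pick $j\in\{2,\dots,p\}\setminus\{i\}$ and read (3.5) as $e_i(\alpha)=(\lambda_i-\lambda_j)\omega_{ji}^j=0$. Differentiating the trace identity $nH=\lambda_1+(p-1)\alpha+(n-p)\beta$ along $e_i$ and using $e_i(H)=0$ yields the linear coupling $(p-1)e_i(\alpha)+(n-p)e_i(\beta)=0$, which upgrades the vanishing of $e_i(\alpha)$ to $e_i(\beta)=0$ as well. The symmetric argument handles $i\in\{p+1,\dots,n\}$ whenever $n-p\ge 2$; the borderline subcase $n-p=1$ (only the single index $n$ remains) will be absorbed into the next step.

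\textbf{Direction $e_1$ (main step).} Reading (3.5) with $i=1$, the quantity $\omega_{j1}^j$ is constant on each eigenspace of $A$: call it $a$ on the $\alpha$-eigenspace and $b$ on the $\beta$-eigenspace, with $e_1(\alpha)=(\lambda_1-\alpha)a$ and $e_1(\beta)=(\lambda_1-\beta)b$. Using the skew-symmetry $\omega_{ii}^1=-\omega_{i1}^i$ together with $\nabla_{e_1}e_1=0$ (which follows from (3.2), (3.4) and (3.7)), the Laplacian (2.8) simplifies to
\begin{equation*}
\Delta H \;=\; -e_1(e_1(H))-\bigl[(p-1)a+(n-p)b\bigr]\,e_1(H).
\end{equation*}
Inserting this into the normal biharmonic equation (2.6), together with $\operatorname{tr}A^2=\tfrac{n^2}{4}H^2+(p-1)\alpha^2+(n-p)\beta^2$, produces one algebraic relation in $H,\alpha,a,b,e_1(H),e_1^2(H)$. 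A second relation arises from differentiating (3.9) along $e_1$. To close the system I would then differentiate the biharmonic equation once more along $e_1$, or equivalently invoke the Gauss equation $R(e_1,e_j)e_j=\lambda_1\lambda_j e_1$ to read off $e_1(a),e_1(b)$ in terms of $\lambda_1\alpha$ and $\lambda_1\beta$; the resulting over-determined polynomial system should force $a=b=0$, giving $e_1(\alpha)=e_1(\beta)=0$. The same machinery handles the borderline subcase $n-p=1$ for the index $i=n$: Codazzi (3.5) forces $e_n(\alpha)$ to take the form $(\beta-\alpha)c$ for a single scalar $c$, and the biharmonic-Gauss system is then run with $c$ in place of $b$ to conclude $c=0$.

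\textbf{Main obstacle.} The difficulty lies squarely in this closure step: the biharmonic equation alone supplies a single scalar identity, so the argument hinges on an intricate combination of Codazzi, Gauss and biharmonic data. Keeping the computation of $\Delta H$ tractable requires using the vanishings $\omega_{ij}^1=0$ obtained from (3.6) with $k=1$ when $i,j$ lie in the same eigenspace, and careful bookkeeping of the connection coefficients coupling $V_\alpha$ and $V_\beta$; it is the orchestration of these identities into a system rigid enough to exclude nontrivial $a,b$ that I expect to be the delicate part of the proof.
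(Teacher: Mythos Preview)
You have misread the lemma. Despite the index range ``$i=1,\ldots,n$'' printed in the statement, the paper's proof establishes only the cases $i\ge 2$; immediately after the lemma the functions $A=\dfrac{e_1(\alpha)}{\lambda_1-\alpha}$ and $B=\dfrac{e_1(\beta)}{\lambda_1-\beta}$ are introduced and used nontrivially, and near the end of the argument the phrase ``since $e_1(\alpha)\neq 0$'' is used explicitly. In fact your proposed conclusion $e_1(\alpha)=e_1(\beta)=0$ is inconsistent with the standing hypothesis: differentiating the trace identity $nH=\lambda_1+(p-1)\alpha+(n-p)\beta$ along $e_1$ with $\lambda_1=-\tfrac{n}{2}H$ would then give $\tfrac{3n}{2}e_1(H)=0$, contradicting $e_1(H)\neq 0$. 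So your ``main step'' is attempting to prove something strictly stronger than the lemma --- essentially the full theorem by immediate contradiction --- and the vague claim that ``the resulting over-determined polynomial system should force $a=b=0$'' cannot go through; indeed the entire remainder of the paper (equations (3.38)--(3.68)) is devoted to squeezing a contradiction out of precisely that system, and it requires a ninth-degree polynomial argument, not a one-line closure.

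Your treatment of the directions $e_i$ with $i\ge 2$ is correct and matches the paper when the relevant eigenspace has multiplicity at least two. The only genuinely nontrivial content of Lemma 3.1 is therefore the borderline case $p=n-1$, where one must show $e_n(\alpha)=0$, and here your sketch (``the same machinery\ldots with $c$ in place of $b$'') is both misplaced and too vague. The paper does \emph{not} insert $e_n$-data into $\Delta H$; instead it uses the Gauss equation $R(e_2,e_n)e_1=0$ (not $R(e_1,e_j)e_j$ as you suggest) to compute $e_n\!\bigl(\tfrac{e_1(\alpha)}{\lambda_1-\alpha}\bigr)$ and $e_n\!\bigl(\tfrac{e_1(\lambda_n)}{\lambda_1-\lambda_n}\bigr)$, then differentiates the scalar biharmonic equation (2.6) along $e_n$ \emph{twice}. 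Under the assumption $e_n(\alpha)\neq 0$ this produces two algebraic relations in $H,\alpha$ whose combination collapses to $3(n-2)H\bigl[3nH-2(n-1)\alpha\bigr]^2=0$, forcing $\alpha=\tfrac{3n}{2(n-1)}H$ and contradicting (3.11). That double differentiation along $e_n$ is the missing idea in your proposal.
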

\begin{proof}
Consider the equation (3.5). Since $n\geq4$, it follows from (3.9)
that $p-1\geq2$. For $i, j=2, 3,\ldots,p$ and $i\neq j$ in (3.5),
one has
\begin{eqnarray}
e_i(\alpha)=0, \quad i=2, 3, \ldots, p.
\end{eqnarray}
If the multiplicity of principal curvature $\beta$ satisfies
$n-p\geq2$, then for $i, j=p+1, \ldots,n$ and $i\neq j$ in (3.5) we
have
\begin{eqnarray}
e_i(\beta)=0, \quad i=p+1, \ldots, n.
\end{eqnarray}
Hence, the conclusion follows directly from (3.2), (3.10), (3.12)
and (3.13).

If the multiplicity of principal curvature $\beta$ is one, namely
$p=n-1$, then from (3.12) we only need to show that $e_n(\alpha)=0$.

Let us compute
$[e_1,e_i](H)=\big(\nabla_{e_1}e_i-\nabla_{e_i}e_1\big)(H)$ for
$i=2, \ldots, n$. From the first expression of (3.4), we have
$\omega_{i1}^1=0$. For $j=1$ and $i\neq1$ in (3.5), by (3.2) we have
$\omega_{1i}^1=0$ $(i\neq1)$. Hence we have
\begin{eqnarray}
e_ie_1(H)=0,\quad i=2, \ldots, n.
\end{eqnarray}
By (3.12), a similar way can also show that
\begin{eqnarray}
e_ie_1(\alpha)=0,\quad i=2, \ldots, n-1.
\end{eqnarray}

 For $j=1$, $k, i\neq1$ in (3.6) we have
\begin{eqnarray*}
(\lambda_i-\lambda_1)\omega_{ki}^1=(\lambda_k-\lambda_1)\omega_{ik}^1,
\end{eqnarray*}
which together with (3.7) yields
\begin{eqnarray}
\omega_{ij}^1=0, \quad i\neq j,\quad i, j=2,\ldots, n.
\end{eqnarray}
Moreover, combining (3.16) with the second equation of (3.4) gives
\begin{eqnarray}
\omega_{i1}^j=0, \quad i, j=2,\ldots, n, \quad j\neq i.
\end{eqnarray}
It follows from (3.5) that
\begin{eqnarray}
\omega_{i1}^i=\frac{e_1(\lambda_i)}{\lambda_1-\lambda_i}, \quad
i=2,\ldots, n.
\end{eqnarray}
For $k=2$ and $i=n$ in (3.6), we have
\begin{eqnarray*}
(\lambda_n-\lambda_j)\omega_{2n}^j=(\lambda_2-\lambda_j)\omega_{n2}^j,
\end{eqnarray*}
which yields
\begin{eqnarray*}
\omega_{2n}^j=0, \quad j=3,\ldots, n-1.
\end{eqnarray*}
Hence, from the first expression of (3.4) and (3.16) we get
\begin{eqnarray}
\omega_{2n}^j=0, \quad j=1, 3,\ldots, n.
\end{eqnarray}
Also, (3.5) yields
\begin{eqnarray}
\omega_{2n}^2=\frac{e_n(\alpha)}{\lambda_n-\alpha}.
\end{eqnarray}

From the Gauss equation and (3.1) we have $R(e_2,e_n)e_1=0$. Recall
the definition of Gauss curvature tensor
\begin{eqnarray}
R(X,Y)Z=\nabla_X\nabla_YZ-\nabla_Y\nabla_XZ-\nabla_{[X,Y]}Z.
\end{eqnarray}
It follows from (3.15), (3.17-20) and (3.4) that
\begin{eqnarray*}
&&\nabla_{e_2}\nabla_{e_n}e_1=\frac{e_1(\lambda_n)e_n(\alpha)}{(\lambda_1-\lambda_n)(\lambda_n-\alpha)}e_2,\\
&&\nabla_{e_n}\nabla_{e_2}e_1=e_n(\frac{e_1(\alpha)}{\lambda_1-\alpha})e_2+\frac{e_1(\alpha)}{\lambda_1-\alpha}\sum_{k=3}^{n}\omega_{n2}^ke_k,\\
&&\nabla_{[e_2,e_n]}e_1=\frac{e_n(\alpha)e_1(\alpha)}{(\lambda_n-\alpha)(\lambda_1-\alpha)}
e_2-\frac{e_1(\alpha)}{\lambda_1-\alpha}\sum_{k=3}^{n}\omega_{n2}^ke_k.
\end{eqnarray*}
Hence we obtain
\begin{eqnarray}
e_n(\frac{e_1(\alpha)}{\lambda_1-\alpha})=\Big(\frac{e_1(\lambda_n)}{\lambda_1-\lambda_n}-
\frac{e_1(\alpha)}{\lambda_1-\alpha}\Big)\frac{e_n(\alpha)}{\lambda_n-\alpha}.
\end{eqnarray}
Note that $\lambda_1=-\frac{n}{2}H$ and
$\lambda_n=\beta=\frac{3}{2}nH-(n-2)\alpha$ in this case.

Equation (3.22) can be rewritten as
\begin{eqnarray}
e_ne_1(\alpha)=\Big\{-\frac{e_1(\alpha)}{\lambda_1-\alpha}+\Big(\frac{e_1(\lambda_n)}{\lambda_1-\lambda_n}-
\frac{e_1(\alpha)}{\lambda_1-\alpha}\Big)\frac{\lambda_1-\alpha}{\lambda_n-\alpha}\Big\}e_n(\alpha).
\end{eqnarray}
By (3.23), we compute
\begin{eqnarray}
e_n(\frac{e_1(\lambda_n)}{\lambda_1-\lambda_n})&=&-(n-2)\Big(\frac{e_ne_1(\alpha)}{\lambda_1-\lambda_n}
+\frac{e_1(\lambda_n)e_n(\alpha)}{(\lambda_1-\lambda_n)^2}\Big)\nonumber\\
&=&-(n-2)\frac{e_n(\alpha)}{\lambda_1-\lambda_n}\Big(\frac{e_1(\lambda_n)}{\lambda_1-\lambda_n}-
\frac{e_1(\alpha)}{\lambda_1-\alpha}\Big)\frac{\lambda_1+\lambda_n-2\alpha}{\lambda_n-\alpha}.
\end{eqnarray}

It follows from (3.5) and the second expression of (3.4) that
\begin{eqnarray}
\omega_{ii}^1=-\omega_{i1}^i=-\frac{e_1(\lambda_i)}{\lambda_1-\lambda_i}.
\end{eqnarray}
Now consider the equation (2.6). It follows from (2.8), (3.1) and
(3.25) that
\begin{equation}
-e_1e_1(H)-\Big(\frac{(n-2)e_1(\alpha)}{\lambda_1-\alpha}+\frac{e_1(\lambda_n)}{\lambda_1-\lambda_n}\Big)e_1(H)+H[{\lambda_1}^2+(n-2)\alpha^2+{\lambda_n}^2]=0.
\end{equation}
Differentiating (3.26) along $e_n$, by (3.22) and (3.24) we get
\begin{equation*}
\Big\{\frac{2}{\lambda_1-\lambda_n}\Big(\frac{e_1(\lambda_n)}{\lambda_1-\lambda_n}-
\frac{e_1(\alpha)}{\lambda_1-\alpha}\Big)e_1(H)+H\big(-3nH+2(n-1)\alpha\big)\Big\}e_n(\alpha)=0.
\end{equation*}
If $e_n(\alpha)\neq0$, then the above equation becomes
\begin{equation}
\frac{2}{\lambda_1-\lambda_n}\Big(\frac{e_1(\lambda_n)}{\lambda_1-\lambda_n}-\frac{e_1(\alpha)}{\lambda_1-\alpha}\Big)e_1(H)+H\big(-3nH+2(n-1)\alpha\big)=0.
\end{equation}
Differentiating (3.27) along $e_n$ and using (3.22) and (3.24)
again, one has
\begin{eqnarray}
&&\frac{2n(4-n)H+2(n-2)(n-1)\alpha}{(\lambda_1-\lambda_n)(\lambda_n-\alpha)}\Big(\frac{e_1(\lambda_n)}{\lambda_1-\lambda_n}-\frac{e_1(\alpha)}{\lambda_1-\alpha}\Big)e_1(H)\nonumber
\\&&+H\big((-7n+10)nH+4(n-1)(n-2)\alpha\big)=0.
\end{eqnarray}
Therefore, combining (3.28) with (3.27) gives
\begin{eqnarray*}
3(n-2)H[3nH-2(n-1)\alpha]^2=0,
\end{eqnarray*}
which implies that
\begin{eqnarray*}
\alpha=\frac{3n}{2(n-1)}H.
\end{eqnarray*}
This contradicts to (3.11). Hence, we obtain $e_n(\alpha)=0$, which
completes the proof of Lemma 3.1.
\end{proof}

Now, we are ready to express the connection coefficients of
hypersurfaces.

For $j=1$ and $i=2,\ldots,n$ in (3.5), by (3.2) we get
$\omega_{1i}^1=0$. Moreover, by the first and second expressions of
(3.4) we have
\begin{eqnarray}
\omega_{1i}^1=\omega_{11}^i=0,\quad i=1, \ldots, n.
\end{eqnarray}
For $i=1$, $j=2, \ldots, n$ in (3.5), we obtain
\begin{eqnarray}
\omega_{j1}^j=-\omega_{jj}^1=\frac{e_1(\lambda_j)}{\lambda_1-\lambda_j},
\quad j=2, \ldots, n.
\end{eqnarray}
For $i=p+1,\ldots,n$, $j=2,\ldots, p$ in (3.5), by (3.2) we have
\begin{eqnarray}
\omega_{ji}^j=-\omega_{jj}^i=0.
\end{eqnarray}
Similarly, for $i=2,\ldots, p$, $j=p+1,\ldots,n$ in (3.5), we also
have
\begin{eqnarray}
\omega_{ji}^j=-\omega_{jj}^i=0.
\end{eqnarray}
For $i=1$, by choosing $j, k=2,\ldots,p$ or $k, j=p+1,\ldots,n$
($j\neq k$) in (3.6), we have
\begin{eqnarray}
\omega_{k1}^j=\omega_{kj}^1=0.
\end{eqnarray}
For $i=2,\ldots, p$ and $j, k=p+1,\ldots, n$ ($j\neq k$) in (3.6),
we get
\begin{eqnarray}
\omega_{ki}^j=\omega_{kj}^i=0.
\end{eqnarray}
For $i=2,\ldots, p$, $j=1$ and $k=p+1,\ldots, n$ in (3.6), we have
\begin{eqnarray*}
(\alpha-\lambda_1)\omega_{ki}^1=(\beta-\lambda_1)\omega_{ik}^1,
\end{eqnarray*}
which together with (3.7) and the second expression of (3.4) gives
\begin{eqnarray}
\omega_{ki}^1=\omega_{ik}^1=\omega_{k1}^i=\omega_{i1}^k=0.
\end{eqnarray}
For $i=2,\ldots, p$, $k=1$ and $j=p+1,\ldots, n$ in (3.6), we obtain
\begin{eqnarray*}
(\beta-\alpha)\omega_{1i}^j=(\lambda_1-\alpha)\omega_{i1}^j,
\end{eqnarray*}
which together with (3.35) yields
\begin{eqnarray}
\omega_{1i}^j=\omega_{1j}^i=0.
\end{eqnarray}

Combining (3.29-3.36) with (3.4) and summarizing, we have the
following lemma.
\begin{lemma}
Let $M^n$ be a biharmonic hypersurface with non-constant mean
curvature in Euclidean space $\mathbb E^{n+1}$, whose shape operator
given by (3.1) with respect to an orthonormal frame $\{e_1, \ldots,
e_n\}$. Then we have
\begin{eqnarray*}
&&\nabla_{e_1}e_1=0;~
\nabla_{e_i}e_1=\frac{e_1(\lambda_i)}{\lambda_1-\lambda_i}e_i,~i=2,\ldots,n;\\
&&\nabla_{e_i}e_j=\sum_{k=2, k\neq j}^{p}\omega_{ij}^ke_k,
~i=1,\ldots,n,~j=2,\ldots,p,~i\neq j;\\
&&\nabla_{e_i}e_i=-\frac{e_1(\lambda_i)}{\lambda_1-\lambda_i}e_1+\sum_{k=2,
k\neq i}^{p}\omega_{ii}^ke_k,
~i=2,\ldots,p;\\
&&\nabla_{e_i}e_j=\sum_{k=p+1, k\neq j}^{n}\omega_{ij}^ke_k,
~i=1,\ldots,n,~j=p+1,\ldots,n,~i\neq j;\\
&&\nabla_{e_i}e_i=-\frac{e_1(\lambda_i)}{\lambda_1-\lambda_i}e_1+\sum_{k=p+1,
k\neq i}^{n}\omega_{ii}^ke_k, ~i=p+1,\ldots,n,
\end{eqnarray*}
where $\omega_{ki}^j=-\omega_{kj}^i$ for $i\neq j$ and $i, j,
k=1,\ldots, n$.
\end{lemma}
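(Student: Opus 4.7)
The plan is to verify the connection formulas of Lemma 3.2 by collecting the identities (3.29)--(3.36) proved above and organizing them according to the block structure of the shape operator. Since all of the substantive analysis has already been carried out (culminating in Lemma 3.1 and the subsequent Codazzi-based computations), the argument reduces to a bookkeeping exercise, based throughout on the skew-symmetry $\omega_{ki}^j = -\omega_{kj}^i$ from (3.4).

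First I would handle $\nabla_{e_1}e_1$: from (3.4) one has $\omega_{11}^1 = 0$, and (3.29) provides $\omega_{11}^i = 0$ for $i \geq 2$, giving $\nabla_{e_1}e_1 = 0$. For $\nabla_{e_i}e_1$ with $i \geq 2$, the $e_1$-component vanishes by skew-symmetry; the $e_i$-component is $\omega_{i1}^i = e_1(\lambda_i)/(\lambda_1-\lambda_i)$ by (3.30); and the remaining components $\omega_{i1}^j$ with $j \neq 1, i$ vanish by (3.33) when $j$ and $i$ share a block and by (3.35) when they lie in different blocks.

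For the mixed terms $\nabla_{e_i}e_j$ with $j$ in the $\alpha$-block $\{2,\dots,p\}$ and $i \neq j$, the goal is to confine the expansion to $\mathrm{span}\{e_2,\dots,e_p\}$. The $e_1$-component $\omega_{ij}^1$ vanishes by (3.33), (3.35), or by combining (3.7) with skew-symmetry, according to whether $i = 1$, $i$ lies in the $\beta$-block, or $i$ lies in the $\alpha$-block. The $\beta$-block components $\omega_{ij}^k$ for $k \geq p+1$ are killed by (3.34) and (3.36). For the diagonal term $\nabla_{e_i}e_i$ with $i \in \{2,\dots,p\}$, the $e_1$-component picks up the value $\omega_{ii}^1 = -\omega_{i1}^i = -e_1(\lambda_i)/(\lambda_1-\lambda_i)$ via (3.4) and (3.30), while the $\beta$-block components vanish by (3.31).

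The two remaining formulas, for $j$ in the $\beta$-block $\{p+1,\dots,n\}$, follow by exchanging the roles of the two blocks: the $\alpha$- and $\beta$-block computations are fully symmetric, so (3.32) takes the place of (3.31) and the same Codazzi identities (3.33)--(3.36) eliminate the off-block entries. The only real obstacle is keeping the casework organized---one must consistently track which block each index belongs to in order to invoke the correct Codazzi or compatibility identity---but no further analytical input is required beyond a systematic enumeration.
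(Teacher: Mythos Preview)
Your proposal is correct and follows exactly the paper's approach: the paper simply states ``Combining (3.29--3.36) with (3.4) and summarizing, we have the following lemma,'' and your write-up is precisely that bookkeeping spelled out. One minor point: when $i,j$ both lie in the $\alpha$-block and $k$ lies in the $\beta$-block, the vanishing of $\omega_{ij}^k$ is not literally covered by (3.34) or (3.36) as you cite, but by the symmetric analogue of (3.34) with the roles of the blocks interchanged (and (3.32) handles the diagonal case $k=i$ when $i$ is in the $\beta$-block); this is the same routine Codazzi step and the paper's summary glosses over it as well.
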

Let us introduce two smooth functions $A$ and $B$ as follows
\begin{eqnarray}
A=\frac{e_1(\alpha)}{\lambda_1-\alpha},\quad
B=\frac{e_1(\beta)}{\lambda_1-\beta}.
\end{eqnarray}
One can compute the curvature tensor $R$ by Lemma 3.2 and apply the
Gauss equation for different values of $X$, $Y$ and $Z$. After
comparing the coefficients with respect to the orthonormal basis
$\{e_1, \ldots, e_n\}$ we get the following:
\begin{itemize}
\item $X=e_1, Y=e_2, Z=e_1$,
\begin{eqnarray}
e_1(A)+A^2=-\lambda_1\alpha;
\end{eqnarray}
\item $X=e_1, Y=e_n, Z=e_1$,
\begin{eqnarray}
e_1(B)+B^2=-\lambda_1\beta;
\end{eqnarray}
\item $X=e_n, Y=e_2, Z=e_n$,
\begin{eqnarray}
AB=-\alpha\beta.
\end{eqnarray}
\end{itemize}
Consider the  equation (2.6) again. It follows from (2.8), (3.1),
(3.37) and Lemma 3.2 that
\begin{equation}
-e_1e_1(H)-\big[(p-1)A+(n-p)B\big]e_1(H)+H\big[\lambda^2_1+(p-1)\alpha^2+(n-p)\beta^2\big]=0.
\end{equation}
We will derive a key equation for later use.
\begin{lemma}
The functions $A$ and $B$ are related by
\begin{eqnarray}
\big[(4-p)A+(3+p-n)B\big]e_1(H)+\frac{3n^2(n+6-p)}{4(n-p)}H^3\nonumber\\
-\frac{3n(n-2+4p)}{2(n-p)}H^2\alpha+\frac{3n(p-1)}{n-p}H\alpha^2=0.
\end{eqnarray}
\end{lemma}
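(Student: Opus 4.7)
The plan is to compute $e_1^2(H)$ in two different ways --- once from the biharmonic equation (3.41) and once by doubly differentiating the trace relation (3.10) along $e_1$ --- and then to close the loop via the Gauss identity (3.40). Throughout set $E := e_1(H)$ and write $P = (p-1)A + (n-p)B$, $P' = (p-1)A\alpha + (n-p)B\beta$, $Q = (p-1)A^2 + (n-p)B^2$, $R = (p-1)A^2\alpha + (n-p)B^2\beta$, and $a_2 = (p-1)\alpha^2 + (n-p)\beta^2$ for the abbreviations that will recur.

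Differentiating (3.10) once along $e_1$, using $e_1(\alpha) = A(\lambda_1 - \alpha)$ and $e_1(\beta) = B(\lambda_1 - \beta)$ from (3.5) and (3.37), gives the first-order identity $\frac{3n}{2}E = \lambda_1 P - P'$. Differentiating a second time and substituting $e_1(A) = -\lambda_1\alpha - A^2$ and $e_1(B) = -\lambda_1\beta - B^2$ from (3.38), (3.39) yields an expression for $e_1^2(H)$ depending only on $H$, $\alpha$, $\beta$, $A$, $B$. Equating it with $e_1^2(H) = HS - PE$ coming from (3.41), where $S = \lambda_1^2 + a_2$, one obtains after straightforward simplification an intermediate relation that I will call (A):
\[
PE = \frac{3n^2}{4}H^3 + 2Ha_2 - HQ - \frac{2R}{n}.
\]

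Next, (3.40) yields a complementary algebraic identity. By direct expansion,
\[
(A+B)P = Q + (n-1)AB, \qquad (A+B)P' = R + \frac{3n}{2}H\cdot AB,
\]
the second of which also uses (3.10) in the form $(p-1)\alpha + (n-p)\beta = \frac{3n}{2}H$. Rearranging the first-order identity as $3E = -HP - \frac{2}{n}P'$, multiplying by $A+B$, and replacing $AB$ by $-\alpha\beta$ via (3.40), one obtains the second relation (B):
\[
HQ + \frac{2R}{n} + 3(A+B)E = (n+2)H\alpha\beta.
\]

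Finally, substituting (B) into (A) eliminates both $Q$ and $R$, leaving
\[
[P - 3(A+B)]\,E = \frac{3n^2}{4}H^3 + 2Ha_2 - (n+2)H\alpha\beta.
\]
Substituting $\beta = (\frac{3n}{2}H - (p-1)\alpha)/(n-p)$ from (3.10) into $a_2$ and $H\alpha\beta$ turns the right-hand side into a cubic polynomial in $H$ and $\alpha$ whose coefficients match exactly those appearing in (3.42), while $P - 3(A+B) = -[(4-p)A + (3+p-n)B]$ produces the stated coefficient of $e_1(H)$. The main obstacle is to avoid generating a third derivative $e_1^3(H)$ in the derivation of (A); this is managed by differentiating the algebraic relation (3.10) twice --- not (3.41) --- so that (3.41) enters only once, as a clean substitution. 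The remaining work is the bookkeeping to verify the two factorizations of the second step and to carry out the final polynomial identification.
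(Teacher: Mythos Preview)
Your proof is correct and follows essentially the same strategy as the paper: eliminate the second derivatives from (3.41) by combining it with the twice-differentiated trace relation (3.10), using the Gauss identities (3.38)--(3.40) along the way. Your packaging via the aggregates $P,\,P',\,Q,\,R$ and the auxiliary identity (B) obtained by multiplying the first-order relation by $(A+B)$ is a tidier organization of the same elimination the paper carries out by forming (3.47)--(3.48) and combining them linearly with (3.41).
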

\begin{proof}
By (3.37), equations (3.38) and (3.39) further reduce to
\begin{eqnarray}
e_1e_1(\alpha)+2Ae_1(\alpha)-Ae_1(\lambda_1)+\lambda_1\alpha(\lambda_1-\alpha)=0,\\
e_1e_1(\beta)+2Be_1(\beta)-Be_1(\lambda_1)+\lambda_1\beta(\lambda_1-\beta)=0.
\end{eqnarray}
Since $\alpha$ and $\beta$ are related by
$(n-p)\beta+(p-1)\alpha=\frac{3nH}{2}$, it follows from (3.37) that
\begin{eqnarray}
e_1(\alpha)=\frac{3n}{2(p-1)}e_1(H)-\frac{n-p}{p-1}B(\lambda_1-\beta),\\
e_1(\beta)=\frac{3n}{2(n-p)}e_1(H)-\frac{p-1}{n-p}A(\lambda_1-\alpha).
\end{eqnarray}
Substituting (3.45) and (3.46) into (3.47) and (3.48), respectively,
by (3.40) we have
\begin{eqnarray}
e_1e_1(\alpha)+\big(\frac{3}{p-1}+\frac{1}{2}\big)nAe_1(H)+\frac{2(n-p)}{p-1}(\lambda_1-\beta)\alpha\beta+\lambda_1\alpha(\lambda_1-\alpha)=0,\\
e_1e_1(\beta)+\big(\frac{3}{n-p}+\frac{1}{2}\big)nBe_1(H)+\frac{2(p-1)}{n-p}(\lambda_1-\alpha)\alpha\beta+\lambda_1\beta(\lambda_1-\beta)=0,
\end{eqnarray}
where we use $\lambda_1=-\frac{nH}{2}$. By using
$(n-p)\beta+(p-1)\alpha=\frac{3nH}{2}$, we could eliminate
$e_1e_1(H)$, $e_1e_1(\alpha)$ and $e_1e_1(\beta)$ from (3.41),
(3.47) and (3.48). Consequently, we obtain the desired equation
(3.42).
\end{proof}

Moreover, by using $(n-p)\beta+(p-1)\alpha=\frac{3nH}{2}$ and (3.37)
we have
\begin{eqnarray}
e_1(H)=-\Big[\frac{p-1}{3}H+\frac{2(p-1)}{3n}\alpha\Big]A+\Big[-\frac{n+3-p}{3}H+\frac{2(p-1)}{3n}\alpha\Big]B.
\end{eqnarray}
Substituting (3.49) into (3.42) and using (3.40), we get
\begin{eqnarray}
&&(4-p)(p-1)(nH+2\alpha)A^2+(3+p-n)[n(n+3-p)H-2(p-1)\alpha]B^2\nonumber\\
&&=\frac{n(p-1)(-2p^2+2pn+11p+n-12)}{n-p}H\alpha^2-\frac{2(p-1)^2(2p-n-1)}{n-p}\alpha^3\nonumber\\
&&+\frac{9n^3(n+6-p)}{4(n-p)}H^3+\frac{3n^2(p-1)(2p-2n-15)}{2(n-p)}H^2\alpha.
\end{eqnarray}

Multiplying $A$ and $B$ successively on the equation (3.42), using
(3.40) one gets respectively
\begin{eqnarray}
&&(4-p)A^2e_1(H)-(3+p-n)\alpha\beta e_1(H)\\
&&+\Big[\frac{3n^2(n+6-p)}{4(n-p)}H^3-\frac{3n(n-2+4p)}{2(n-p)}H^2\alpha+\frac{3n(p-1)}{n-p}H\alpha^2\Big]A=0,\nonumber\\
&&(3+p-n)B^2e_1(H)-(4-p)\alpha\beta e_1(H)\\
&&+\Big[\frac{3n^2(n+6-p)}{4(n-p)}H^3-\frac{3n(n-2+4p)}{2(n-p)}H^2\alpha+\frac{3n(p-1)}{n-p}H\alpha^2\Big]B=0.\nonumber
\end{eqnarray}
Differentiating (3.42) along $e_1$, and using (3.38-39) and (3.41)
we have
\begin{eqnarray}
&&\Big[(4-p)(\frac{n}{2}H\alpha-A^2)+(3+p-n)(\frac{n}{2}H\beta-B^2)\Big]e_1(H)\nonumber\\
&&-\Big[(4-p)A+(3+p-n)B\Big]\Big[(p-1)A+(n-p)B\Big]e_1(H)\nonumber\\
&&+\Big[(4-p)A+(3+p-n)B\Big]\Big[\frac{n^2}{4}H^3+(p-1)H\alpha^2+(n-p)H\beta^2\Big]\nonumber\\
&&+\Big[\frac{9n^2(n+6-p)}{4(n-p)}H^2-\frac{3n(n-2+4p)}{n-p}H\alpha+\frac{3n(p-1)}{n-p}\alpha^2\Big]e_1(H)\nonumber\\
&&-\frac{3n(n-2+4p)}{2(n-p)}H^2e_1(\alpha)+\frac{6n(p-1)}{n-p}H\alpha
e_1(\alpha)=0.
\end{eqnarray}
Substituting (3.51), (3.52), (3.42) into (3.53), and using the first
expression of (3.37) we obtain
\begin{eqnarray}
&&\Big[\frac{3n^2(2n-2p+21)}{4(n-p)}H^2-\frac{3n(5p+1)}{n-p}H\alpha+\frac{(p-1)(2n+7)}{n-p}\alpha^2\Big]e_1(H)\\
&&+\Big[\frac{n^2(2pn-2p^2+7n+17p+30)}{4(n-p)}H^3-\frac{3n(3np+2p^2+4p-3n-6)}{2(n-p)}H^2\alpha\nonumber\\
&&+\frac{(p-1)(2np-2n+p-4)}{n-p}H\alpha^2\Big]A+\Big[\frac{n^2\big(2(n-p)^2+15(n-p)+45\big)}{4(n-p)}H^3\nonumber\\
&&-\frac{3n(n^2+np-2p^2+10p+n-8)}{2(n-p)}H^2\alpha
+\frac{(p-1)(2n^2-2np+7n-p-3)}{n-p}H\alpha^2\Big]B=0.\nonumber
\end{eqnarray}
From (3.49), equation (3.54) further reduces to
\begin{eqnarray}
&&\Big[\frac{9}{4}n^3(3n-2p+17)H^3-\frac{3}{2}n^2(-6p^2+11np+43p-11n-37)H^2\alpha\\
&&+n(p-1)(4np-4n+26p+1)H\alpha^2-2(p-1)^2(2n+7)\alpha^3\Big]A\nonumber\\
&&-\Big[\frac{9}{2}(2n-2p+3)H^3+\frac{9}{2}n^2(2p^2+n^2-3np-7p+n-3)H^2\alpha\nonumber\\
&&-2n(p-1)(2n^2-2np+4n-13p-18)H\alpha^2-2(p-1)^2(2n+7)\alpha^3\Big]B=0.\nonumber
\end{eqnarray}
At this moment, we obtain all the desired equations (3.40), (3.50)
and (3.55) concerning $A$ and $B$.

In order to write handily, we introduce several notions: $L, M$
denoting the coefficients of $A$ and $B$ respectively in (3.55), and
$N$ denoting the right-hand side of equal sign in the  equation
(3.50). Then (3.55) and (3.50) become
\begin{eqnarray}
&&LA-MB=0,\\
&&(4-p)(p-1)(nH+2\alpha)A^2\nonumber\\
&&+(3+p-n)[n(n+3-p)H-2(p-1)\alpha]B^2=N.
\end{eqnarray}
Multiplying $LM$ on both sides of the equation (3.57), using (3.56)
and (3.40) we can eliminate both $A$ and $B$. Hence, we have
\begin{eqnarray}
&&(4-p)(p-1)(nH+2\alpha)M^2\alpha\frac{\frac{3}{2}nH-(p-1)\alpha}{n-p}\nonumber\\
&&+(3+p-n)[n(n+3-p)H-2(p-1)\alpha]L^2\alpha\frac{\frac{3}{2}nH-(p-1)\alpha}{n-p}\nonumber\\
&&=LMN.
\end{eqnarray}
In view of (3.58), we notice that the equation should have the form:
\begin{eqnarray}
a_9H^9+a_8H^8\alpha+a_7H^7\alpha^2+a_6H^6\alpha^3+a_5H^5\alpha^4
+a_4H^4\alpha^5+a_3H^3\alpha^6\nonumber\\
+a_2H^2\alpha^7+a_1H\alpha^8+a_0\alpha^9=0
\end{eqnarray}
for constant coefficients $a_i$ concerning $n$ and $p$
($i=0,\ldots,9$). Since $p<n$, from (3.58), (3.55) and (3.50) we
compute $a_9$:
\begin{eqnarray}
a_9=\frac{243n^6(n-p+6)(3n-2p+17)(2n-2p+3)}{16(n-p)}\neq0.
\end{eqnarray}
Remark that $\alpha\neq0$. In fact, if $\alpha=0$, then (3.59)
implies that
\begin{eqnarray*}
a_9H^9=0,
\end{eqnarray*}
which is impossible since $H$ is non-constant and $a_9\neq0$.

Put $\Phi=\frac{H}{\alpha}$. Then (3.59) reduces to a non-trivial
algebraic equation of ninth degree with respect to $\Phi$:
\begin{eqnarray}
a_9\Phi^9+a_8\Phi^8+a_7\Phi^7+a_6\Phi^6+a_5\Phi^5\
+a_4\Phi^4+a_3\Phi^3\nonumber\\
+a_2\Phi^2+a_1\Phi+a_0=0.
\end{eqnarray}
Clearly, the equation (3.61) shows that, even in case of the
existence of a real solution, $H$ is proportional to $\alpha$,
namely
\begin{eqnarray}
H=c\alpha,
\end{eqnarray}
where $c$ is a root of the equation (3.61) and has to be a nonzero
constant.

At last, we will derive a contradiction. Substituting (3.62) into
(3.37), and then applying on (3.38), (3.40) and (3.41) respectively,
we have
\begin{eqnarray}
&&e_1e_1(\alpha)-(1+\frac{2}{nc+2})\frac{e_1^2(\alpha)}{\alpha}+\frac{nc}{2}(\frac{nc}{2}+1)\alpha^3=0,\\
&&
e_1^2(\alpha)+\frac{(nc+2)\big(nc(n-p)+3nc-2(p-1)\big)}{4(n-p)}\alpha^4=0,\\
&&-e_1e_1(\alpha)+\Big[\frac{2(p-1)}{nc+2}+\frac{(n-p)\big(3nc-2(p-1)\big)}{nc(n-p)+3nc-2(p-1)}\Big]\frac{e_1^2(\alpha)}{\alpha}\nonumber\\
&&+\Big[\frac{n^2c^2}{4}+(p-1)^2+\frac{(3nc-2(p-1))^2}{4(n-p)}\Big]\alpha^3=0.
\end{eqnarray}
Substituting (3.64) into (3.63), we get
\begin{eqnarray}
e_1e_1(\alpha)+\frac{(nc+6)\big(nc(n-p)+3nc-2(p-1)\big)}{4(n-p)}\alpha^3=0.
\end{eqnarray}
Since $e_1(\alpha)\neq0$, differentiating (3.64) along $e_1$ we
obtain
\begin{eqnarray}
e_1e_1(\alpha)+\frac{2(nc+2)\big[nc(n-p)+3nc-2(p-1)\big]}{4(n-p)}\alpha^3=0.
\end{eqnarray}
Combining (3.67) with (3.66) gives
\begin{eqnarray}
\frac{(nc-2)\big[nc(n-p)+3nc-2(p-1)\big]}{4(n-p)}\alpha^3=0.
\end{eqnarray}
Since $\alpha\neq0$, we have either $nc=2$ or
$nc(n-p)+3nc-2(p-1)=0$.

In the former case, substituting $nc=2$ into (3.64) and (3.66), and
then substituting (3.64) and (3.66) into (3.65) we have
\begin{eqnarray*}
(p-1)^2+(p-1)-1+\frac{2(p-1)^2-13(p-1)+21}{n-p}=0,
\end{eqnarray*}
which reduces to
\begin{eqnarray}
(p-1)^2+(p-1)-1+\frac{2[(p-1)-\frac{13}{4}]^2-\frac{1}{8}}{n-p}=0.
\end{eqnarray}
In this case, since $p<n$, $p\geq2$ and $p\in\mathbb Z^+$, we have
always
\begin{eqnarray*}
(p-1)^2+(p-1)-1>0,
\end{eqnarray*}
and
\begin{eqnarray}
 2[(p-1)-\frac{13}{4}]^2-\frac{1}{8}\geq0.
\end{eqnarray}
Note that the equality in (3.70) holds if and only if $p=4$. the
above information shows that (3.69) gives a contradiction.

In the latter case, substituting $nc(n-p)+3nc-2(p-1)=0$ into (3.64)
and (3.66), respectively, we obtain
\begin{eqnarray*}
e_1e_1(\alpha)=e_1^2(\alpha)=0,
\end{eqnarray*}
which together with (3.65) yields a contradiction as well.

Consequently, we conclude that the mean curvature $H$ must be
constant. Therefore, biharmonic hypersurfaces with three distinct
principal curvatures in $\mathbb E^{n+1}$ have to be minimal.

In conclusion, we can state the main theorem in the following.
\begin{theorem}
Every biharmonic hypersurface with three distinct principal
curvatures in a Euclidean space with arbitrary dimension is minimal.
\end{theorem}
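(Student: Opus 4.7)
The plan is to argue by contradiction: assume the mean curvature $H$ is non-constant, and derive an impossibility from the three-principal-curvature hypothesis. By the tangential part (2.7) of the biharmonic equation, $\operatorname{grad} H$ is a nonzero eigenvector of the shape operator with eigenvalue $-\frac{n}{2}H$, so I can pick an orthonormal frame $\{e_1,\ldots,e_n\}$ diagonalizing $A$ with $e_1 \parallel \operatorname{grad} H$ and $\lambda_1 = -\frac{n}{2}H$. The remaining two distinct eigenvalues I label $\alpha$ (multiplicity $p-1$) and $\beta$ (multiplicity $n-p$); the trace identity $nH = \sum \lambda_i$ fixes $\beta$ as an affine function of $H$ and $\alpha$.

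The first structural step is to show $e_i(\lambda_j)=0$ for every $i$ and every $j\geq 2$, i.e.\ both $\alpha$ and $\beta$ vary only in the $e_1$ direction. When both eigenspaces have dimension at least two this follows immediately from Codazzi (3.5); the delicate case will be $p=n-1$, where I expect to have to compute $R(e_2,e_n)e_1$ in two ways, differentiate the resulting scalar identity twice along $e_n$ using the biharmonic ODE, and rule out the spurious root $\alpha=\frac{3n}{2(n-1)}H$ by appeal to (3.11). Once this is in hand, a methodical run through Codazzi (3.5)--(3.6) with the multiplicity information should collapse all cross-block connection coefficients and leave the compact form of $\nabla_{e_i}e_j$ summarized in a structural lemma.

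With the connection pinned down, I introduce the two scalars $A=e_1(\alpha)/(\lambda_1-\alpha)$ and $B=e_1(\beta)/(\lambda_1-\beta)$. Computing $R(e_1,e_2)e_1$, $R(e_1,e_n)e_1$, $R(e_n,e_2)e_n$ against Gauss should give three clean identities
\[
e_1(A)+A^2=-\lambda_1\alpha,\qquad e_1(B)+B^2=-\lambda_1\beta,\qquad AB=-\alpha\beta,
\]
while the normal biharmonic equation (2.6) supplies a second-order ODE in $H$ involving $A,B,\alpha,\beta$. Using the trace relation $(p-1)\alpha+(n-p)\beta=\frac{3n}{2}H$ to eliminate $\beta$ (and $e_1(\beta),e_1(\alpha)$) in favor of $H,\alpha,A,B$, then differentiating the biharmonic ODE once along $e_1$, should produce an additional polynomial identity linear in $A$ and $B$, say $LA-MB=0$, where $L,M$ are polynomials in $H,\alpha$.

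\emph{Where the real work sits.} The main obstacle is the purely algebraic elimination at the end. I expect to combine $LA=MB$ with the quadratic relation coming from (3.50) and $AB=-\alpha\beta$ to eliminate both $A$ and $B$, yielding a single polynomial equation in $H$ and $\alpha$ of degree nine whose leading coefficient $a_9$ (to be verified by direct top-degree inspection) is nonzero for $p<n$. Setting $\Phi=H/\alpha$ reduces this to a nontrivial degree-nine polynomial in $\Phi$ with constant coefficients, which forces $H=c\alpha$ for some nonzero constant root $c$. Plugging $H=c\alpha$ back into the Gauss identities and the biharmonic ODE produces two ODEs in $\alpha$ whose compatibility should dichotomize into $nc=2$ or $nc(n-p)+3nc-2(p-1)=0$. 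In the first case, a substitution into the leftover ODE should give a quadratic inequality in $p-1$ that is strictly positive for all admissible integers $p\geq 2$; in the second, both $e_1e_1(\alpha)$ and $e_1(\alpha)^2$ vanish, contradicting the assumption that $H$ is non-constant. The tedious coefficient bookkeeping in $n$ and $p$, not any conceptual leap, is what I expect will consume most of the effort, so I would organize the computation to isolate the top-degree coefficient $a_9$ first and defer the lower-order coefficients until the final contradiction is forced.
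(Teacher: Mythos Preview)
Your plan is correct and tracks the paper's own argument essentially step for step: the frame choice, Lemma~3.1 (including the delicate $p=n-1$ case via $R(e_2,e_n)e_1$ and two $e_n$-differentiations of the biharmonic ODE), the connection lemma, the scalars $A,B$ with the three Gauss identities and $AB=-\alpha\beta$, the derivation of a linear relation $LA-MB=0$ and a quadratic relation (3.50), the elimination to a degree-nine polynomial with $a_9\neq 0$, the reduction $H=c\alpha$, and the final dichotomy $nc=2$ versus $nc(n-p)+3nc-2(p-1)=0$ with contradictions in each branch. The only minor imprecision is that the linear relation $LA-MB=0$ is not obtained by directly differentiating the biharmonic ODE (which would introduce a third-order term $e_1e_1e_1(H)$); rather, one first combines the biharmonic ODE with the Gauss identities $e_1(A)+A^2=-\lambda_1\alpha$, $e_1(B)+B^2=-\lambda_1\beta$ to cancel all second derivatives, obtaining an intermediate relation (the paper's (3.42)), and it is \emph{this} relation that is differentiated along $e_1$ to yield $LA-MB=0$.
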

\begin{remark}
Remark that the approach in this paper is self-contained from a
structural point of view, and it maybe provide better insight into
the structure of biharmonic hypersurface. With this method, one
could consider hypersurfaces with four distinct principal curvatures
or the higher codimension cases of Chen's conjecture.
\end{remark}
Finally, we give an application of the main theorem.
\begin{corollary}
Every biharmonic $O(p)\times O(q)$-invariant hypersurface in a
Euclidean space $\mathbb E^{p+q}$ is minimal.
\end{corollary}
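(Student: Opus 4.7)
The plan is to reduce Corollary 3.5 to Theorem 3.4 together with Dimitri\'{c}'s earlier result (recalled in the introduction) that biharmonic hypersurfaces of $\mathbb E^m$ with at most two distinct principal curvatures are minimal. The guiding observation is that $O(p)\times O(q)$-symmetry rigidly constrains the shape operator.

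First, I would fix the standard description of an $O(p)\times O(q)$-invariant hypersurface $M$ in $\mathbb E^{p+q}=\mathbb E^p\times\mathbb E^q$: away from the fixed-point strata of the action, $M$ is swept out by a unit-speed profile curve $\gamma(s)=(f(s),g(s))$ lying in the open quadrant $\{f>0,\,g>0\}$, with parametrization
\[
\phi(s,u,v)=\bigl(f(s)\,u,\;g(s)\,v\bigr),\qquad u\in S^{p-1}\subset\mathbb E^p,\ v\in S^{q-1}\subset\mathbb E^q.
\]
A direct Weingarten computation shows that the tangent bundle splits orthogonally into three $A$-invariant subbundles of ranks $1$, $p-1$ and $q-1$, corresponding respectively to the profile direction $\partial_s$, to the lift of $TS^{p-1}$, and to the lift of $TS^{q-1}$. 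The three associated principal curvatures are scalar functions of $s$ alone: the signed curvature $f'g''-f''g'$ of $\gamma$, together with $-g'/f$ and $f'/g$. Hence $M$ has at most three distinct principal curvatures at every point.

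With this structural fact in hand the argument becomes a case analysis. If $\min(p,q)=1$ then one of the multiplicities $p-1$, $q-1$ vanishes, so $M$ has at most two distinct principal curvatures and Dimitri\'{c}'s theorem forces $M$ to be minimal. If $p,q\geq 2$, I would look at the open subset $U\subseteq M$ on which the three principal curvatures are mutually distinct: on $U$ the hypothesis of Theorem 3.4 is met and $H\equiv 0$ on $U$, while on any open piece of $M\setminus U$ two of the three curvatures coincide identically in $s$ and Dimitri\'{c}'s theorem again gives $H\equiv 0$.

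The main obstacle — essentially bookkeeping — is patching these local conclusions into a global statement. The cleanest device is real-analyticity: since the biharmonic condition for an $O(p)\times O(q)$-invariant $M$ reduces to a real-analytic ODE system in $s$ for the pair $(f,g)$, and $H$ depends only on $s$, the vanishing of $H$ on any open subinterval of the profile parameter forces $H\equiv 0$ on the whole invariant hypersurface. This completes the reduction to Theorem 3.4 and Dimitri\'{c}'s theorem.
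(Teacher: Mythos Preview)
Your approach is essentially the paper's own: parametrize the $O(p)\times O(q)$-invariant hypersurface by a profile curve, observe that the shape operator has at most three distinct eigenvalues (of multiplicities $1$, $p-1$, $q-1$), and then invoke Theorem~3.4 together with Dimitri\'{c}'s result for the $\leq 2$ case. The paper simply cites \cite{alencar2005} for the principal-curvature count and applies Theorem~3.4 without further comment; your case split and patching are just a more careful version of the same reduction, and in fact the real-analyticity device is stronger than necessary---since $H\equiv 0$ on the open set $U$ where three curvatures are distinct and also on the interior of $M\setminus U$ (by Dimitri\'{c}), continuity of $H$ already gives $H\equiv 0$ on all of $M$.
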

$O(p)\times O(q)$-invariant hypersurfaces, that is, invariant under
the action of some isometry group $O(p)\times O(q)$, were studied in
\cite{alencar2005}. For an $O(p)\times O(q)$-invariant hypersurface
$M$ in Euclidean space $\mathbb E^{p+q}$, it can be parameterized by
\begin{eqnarray*}
&&{\bar x}\big(t, \phi_1,\ldots, \phi_{p-1}, \psi_1, \ldots,
\psi_{q-1}\big)\\
&&=\big(x(t)\Phi(\phi_1,\ldots, \phi_{p-1}), y(t)\Psi(\psi_1,
\ldots, \psi_{q-1})\big),
\end{eqnarray*}
where $\Phi$ and $\Psi$ are orthogonal parameterizations of a unit
sphere of the corresponding dimension. It is easy to check that $M$
has at most three distinct principal curvatures, see details in
\cite{alencar2005}. Hence, by applying Theorem 3.4, we immediately
obtain Corollary 3.6.

\section*{Acknowledgments} The author would like to express his sincere gratitude to
Prof. Ye-Lin Ou for his constructive suggestions, comments and
pointing out an application (Corollary 3.6) of the main result
concerning this paper. The author is supported by the Mathematical
Tianyuan Youth Fund of China (No. 11326068), the Natural Science
Foundation of China (No. 71271045), Excellent Innovation talents
Project of DUFE (No. DUFE2014R26), and NSF of Fujian Province of
China (No. 2011J05001).


\bigskip
\address{
School of Mathematics and
Quantitative Economics\\
Dongbei University of Finance and Economics\\
Dalian 116025, P. R. China} {yufudufe@gmail.com}
\end{document}